\newtheorem{theorem}{Theorem}[section]
\newtheorem{proposition}{Proposition}[section]
\newtheorem{example}{Example}[section]
\newenvironment{proof}[1][Proof]{\noindent \textbf{#1.} }{\ \ \  $\Box$}
\newtheorem{remark}{Remark}[section]
\title{A general comparison theorem for $1$-dimensional anticipated BSDEs}
\date{}
 \author{ Xiaoming Xu\thanks{E-mail: xmxu@mail.sdu.edu.cn}
 \\ \small{School of Mathematical Sciences, Nanjing Normal University, Nanjing, 210046, China}\\
 \small{School of Mathematics, Shandong University, Jinan, 250100, China}
 }
\begin{document}

\maketitle

\begin{abstract}
%% Text of abstract
Anticipated backward stochastic differential equation (ABSDE)
studied the first time in 2007 is a new type of stochastic
differential equations. In this paper, we establish a general
comparison theorem for $1$-dimensional ABSDEs with the generators
depending on the anticipated term of $Z$.
\\
\par $\textit{Keywords:}$ Anticipated backward stochastic
differential equation, Backward stochastic differential equation,
Comparison theorem
\end{abstract}

%% \linenumbers

%% main text

\section{Introduction}\label{sec:intro}

Backward Stochastic Differential Equation (BSDE) of the following
general form was considered the first time by Pardoux-Peng
\cite{PP1} in 1990:
\begin{center}
$Y_t=\xi+\int_t^T g(s, Y_s, Z_s)ds-\int_t^T Z_sdB_s.$
\end{center}
Since then, the theory of BSDEs has been studied with great
interest. One of the achievements of this theory is the comparison
theorem, which is due to Peng \cite{P} and then generalized by
Pardoux-Peng \cite{PP2}, El Karoui-Peng-Quenez \cite{KPQ} and
Hu-Peng \cite{HP}. It allows to compare the solutions of two BSDEs
whenever we can compare the terminal conditions and the generators.

Recently, a new type of BSDE, called anticipated BSDE (ABSDE in
short), was introduced by Peng-Yang \cite{PY} (see also Yang
\cite{Y}). The ABSDE is of the following form:
$$
\left\{
\begin{tabular}{rll}
$-dY_t=$ & $f(t, Y_t, Z_t, Y_{t+\delta(t)},
Z_{t+\zeta(t)})dt-Z_tdB_t, $ & $
t\in[0, T];$\\
$Y_t=$ & $\xi_t, $ & $t\in[T, T+K];$\\
$Z_t=$ & $\eta_t, $ & $t\in[T, T+K],$
\end{tabular}\right.\eqno(1.1)$$
where $\delta(\cdot): [0, T]\rightarrow \mathbb{R}^+ \setminus
\{0\}$ and $\zeta(\cdot): [0, T]\rightarrow \mathbb{R}^+\setminus
\{0\}$ are continuous functions satisfying
\par $\mathbf{(a1)}$ there exists a constant $K \geq 0$ such that for each $s\in[0,
T],$ $s+\delta(s) \leq T+K$, $s+\zeta(s) \leq T+K;$
\par $\mathbf{(a2)}$ there exists a constant $M \geq 0$ such that
for each $t\in[0, T]$ and each nonnegative integrable function
$g(\cdot),$ $\int_t^T g(s+\delta(s))ds\leq M\int_t^{T+K} g(s)ds$,
$\int_t^T g(s+\zeta(s))ds\leq M\int_t^{T+K} g(s)ds.$

Peng and Yang proved in \cite{PY} that (1.1) has a unique adapted
solution under proper assumptions, furthermore, they established a
comparison theorem, which requires that the generators of the ABSDEs
cannot depend on the anticipated term of $Z$ and one of them must be
increasing in the anticipated term of $Y$.

The aim of this paper is to give a more general comparison theorem
in which the generators of the ABSDEs break through the above
restrictions. The main approach we adopt is to consider an ABSDE as
a series of BSDEs and then apply the well-known comparison theorem
for $1$-dimensional BSDEs (see \cite{KPQ}).

The paper is organized as follows: in Section 2, we list some
notations and some existing results. In Section 3, we mainly study
the comparison theorem for ABSDEs.

\section{Preliminaries}\label{sec:pre}

Let $\{B_t; t\geq 0\}$ be a $d$-dimensional standard Brownian motion
on a probability space $(\Omega, \mathcal{F}, P)$ and
$\{\mathcal{F}_t; t\geq 0\}$ be its natural filtration. Denote by
$|\cdot|$ the norm in $\mathbb{R}^m.$ Given $T
>0,$ we make the following notations:

$L^2(\mathcal{F}_T; \mathbb{R}^m)$ = $\{\xi\in \mathbb{R}^m$ $|$
$\xi$ is an $\mathcal{F}_T$-measurable random variable such that
$E|\xi|^2< \infty\};$

$L_{\mathcal{F}}^2(0, T; \mathbb{R}^m)$ = $\{ \varphi: \Omega\times
[0, T]\rightarrow \mathbb{R}^m$ $|$ $\varphi$ is progressively
measurable; $E\int_0^T |\varphi_t|^2dt< \infty\};$

$S_{\mathcal{F}}^2(0, T; \mathbb{R}^m)$ = $\{\psi: \Omega\times [0,
T]\rightarrow \mathbb{R}^m$ $|$ $\psi$ is continuous and
progressively measurable; $E[\sup_{0 \leq t \leq T} |\psi_t|^2]<
\infty\}.$

Now consider the ABSDE (1.1). First for the generator $f(\omega, s,
y, z, \theta, \phi): \Omega \times [0, T]\times \mathbb{R}^m\times
\mathbb{R}^{m\times d}\times S_\mathcal{F}^2(s, T+K;
\mathbb{R}^m)\times L_\mathcal{F}^2(s, T+K; \mathbb{R}^{m\times
d})\rightarrow L^2 (\mathcal{F}_s; \mathbb{R}^m),$ we use two
hypotheses:

${\bf{(H1)}}$ there exists a constant $L > 0$ such that for each
$s\in [0, T],$ $y, y^\prime\in \mathbb{R}^m,$ $z, z^\prime \in
\mathbb{R}^{m\times d},$ $\theta, \theta^\prime \in
L_\mathcal{F}^2(s, T+K; \mathbb{R}^m),$ $\phi, \phi^\prime \in
L_\mathcal{F}^2(s, T+K; \mathbb{R}^{m\times d}),$ $r, \bar{r}\in [s,
T+K],$ the following holds: $$|f(s, y, z, \theta_r,
\phi_{\bar{r}})-f(s, y^\prime, z^\prime, \theta_r^\prime,
\phi_{\bar{r}}^\prime)|\leq L(|y-
y^\prime|+|z-z^\prime|+E^{\mathcal{F}_s}[|\theta_r-\theta_r^\prime|+|\phi_{\bar{r}}-\phi_{\bar{r}}^\prime|]);
$$

${\bf{(H2)}}$ $E[\int_0^T |f(s, 0, 0, 0, 0)|^2ds]< \infty.$

Let us review the existence and uniqueness theorem for ABSDEs from
\cite{PY}:

\begin{theorem}\label{Theorem 2.1}
Assume that $f$ satisfies (H1) and (H2), $\delta,$ $\zeta$ satisfy
(a1) and (a2), then for arbitrary given terminal conditions $(\xi,
\eta) \in S_\mathcal{F}^2(T, T+K; \mathbb{R}^m)\times
L_\mathcal{F}^2(T, T+K; \mathbb{R}^{m\times d}),$ the ABSDE (1.1)
has a unique solution, i.e., there exists a unique pair of
$\mathcal{F}_t$-adapted processes $(Y, Z)\in S_\mathcal{F}^2(0, T+K;
\mathbb{R}^m)\times L_\mathcal{F}^2(0, T+K; \mathbb{R}^{m\times d})$
satisfying (1.1).
\end{theorem}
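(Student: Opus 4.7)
The plan is to prove Theorem 2.1 by a Picard-type contraction argument that converts the ABSDE into a family of classical BSDEs, parametrized by a ``frozen'' pair supplying the anticipated arguments. First I would introduce, for a parameter $\beta > 0$ to be tuned, the Hilbert space $\mathcal{H}_\beta$ consisting of pairs $(y, z) \in L_\mathcal{F}^2(0, T+K; \mathbb{R}^m) \times L_\mathcal{F}^2(0, T+K; \mathbb{R}^{m\times d})$ that coincide with $(\xi, \eta)$ on $[T, T+K]$, equipped with the weighted norm $\|(y,z)\|_\beta^2 = E\int_0^{T+K} e^{\beta s}(|y_s|^2 + |z_s|^2)\,ds$. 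For each $(y, z) \in \mathcal{H}_\beta$, the generator $(s, u, v) \mapsto f(s, u, v, y_{s+\delta(s)}, z_{s+\zeta(s)})$ is Lipschitz in $(u, v)$ by (H1) and square integrable at the origin by (H2) together with (a2) and the definition of $\mathcal{H}_\beta$, so Pardoux--Peng yields a unique $(Y, Z) \in S_\mathcal{F}^2(0, T; \mathbb{R}^m) \times L_\mathcal{F}^2(0, T; \mathbb{R}^{m\times d})$ solving
$$Y_t = \xi_T + \int_t^T f(s, Y_s, Z_s, y_{s+\delta(s)}, z_{s+\zeta(s)})\,ds - \int_t^T Z_s\,dB_s, \quad t \in [0, T].$$
Extending $(Y, Z)$ by $(\xi, \eta)$ on $[T, T+K]$ defines a map $\Phi : \mathcal{H}_\beta \to \mathcal{H}_\beta$ whose fixed points are exactly the solutions of (1.1).

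Next I would show that $\Phi$ is a strict contraction for $\beta$ sufficiently large. For $(y, z), (y', z') \in \mathcal{H}_\beta$ with images $(Y, Z), (Y', Z')$, applying It\^o's formula to $e^{\beta t}|Y_t - Y'_t|^2$, using (H1), Young's inequality, and the usual martingale cancellation in expectation, I would derive a bound of the schematic form
$$E\int_0^T e^{\beta s}\bigl(|Y_s - Y'_s|^2 + |Z_s - Z'_s|^2\bigr) ds \leq \frac{C(L)}{\beta}\, E\int_0^T e^{\beta s}\Bigl(E^{\mathcal{F}_s}\bigl[|y_{s+\delta(s)} - y'_{s+\delta(s)}| + |z_{s+\zeta(s)} - z'_{s+\zeta(s)}|\bigr]\Bigr)^2 ds.$$
Conditional Jensen removes the inner expectation, the inequality $e^{\beta s} \leq e^{\beta(s+\delta(s))}$ (and its analogue for $\zeta$) redistributes the weight onto the shifted times, and hypothesis (a2) applied to the nonnegative integrable functions $r \mapsto e^{\beta r}|y_r - y'_r|^2$ and $r \mapsto e^{\beta r}|z_r - z'_r|^2$ converts the right-hand side into $(CMe^{\beta K}/\beta)\|(y - y', z - z')\|_\beta^2$. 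Since $\Phi(y, z)$ and $\Phi(y', z')$ agree on $[T, T+K]$, this also bounds $\|\Phi(y, z) - \Phi(y', z')\|_\beta$, and choosing $\beta$ large enough produces a contraction factor strictly less than $1$. The Banach fixed point theorem then furnishes the unique solution, while continuity of $Y$ and the $S^2$ bound are inherited from the Pardoux--Peng solution evaluated at the fixed point.

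The main obstacle is exactly this Lipschitz estimate: because the anticipated arguments appear inside a conditional expectation in (H1), a naive bound would lose the factor $1/\beta$ that drives the contraction, and the shifted times $s+\delta(s), s+\zeta(s)$ do not a priori fit the integration domain $[0, T]$. Both issues are resolved by (a2), which is tailor-made to convert $\int_0^T g(s+\delta(s))\,ds$ into an integral over $[0, T+K]$, while (a1) ensures the shifted times stay within the region where the data $(\xi, \eta)$ are prescribed. Uniqueness is automatic from the fixed-point formulation, but it can equally be extracted directly by running the same It\^o computation on the difference of two hypothetical solutions.
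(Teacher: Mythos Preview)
The paper does not actually prove Theorem~2.1: it is quoted verbatim from Peng--Yang~\cite{PY} as background, so there is no in-paper argument to compare against. Your contraction scheme is precisely the method used in the cited source, so in spirit you are reconstructing the original proof rather than offering an alternative.

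One point does need fixing. You state the contraction constant as $CMe^{\beta K}/\beta$ and then claim it can be made small by taking $\beta$ large; but for $K>0$ the quantity $e^{\beta K}/\beta$ diverges as $\beta\to\infty$, so the argument as written fails at exactly the crucial step. In fact your own outlined computation does \emph{not} produce the factor $e^{\beta K}$: once you push the weight via $e^{\beta s}\le e^{\beta(s+\delta(s))}$ and then invoke (a2) with $g(r)=e^{\beta r}|y_r-y'_r|^2$, the result is
\[
\int_0^T e^{\beta(s+\delta(s))}\,|y_{s+\delta(s)}-y'_{s+\delta(s)}|^2\,ds
\;\le\; M\int_0^{T+K} e^{\beta r}\,|y_r-y'_r|^2\,dr,
\]
with no exponential penalty in $K$. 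The correct contraction factor is therefore of order $C(L)M/\beta$, which does tend to $0$. Drop the spurious $e^{\beta K}$ and the rest of your plan goes through.
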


Next we will recall the comparison theorem from \cite{PY}. Let
$(Y^{(j)}, Z^{(j)})$ $(j=1, 2)$ be solutions of the following
$1$-dimensional ABSDEs respectively:
$$
\left\{
\begin{tabular}{rll}
$-dY_t^{(j)}=$ & $f_j(t, Y_t^{(j)}, Z_t^{(j)},
Y_{t+\delta(t)}^{(j)})dt-Z_t^{(j)}dB_t, $ & $
t\in[0, T];$\\
$Y_t^{(j)}=$ & $\xi_t^{(j)}, $ & $t\in[T, T+K].$
\end{tabular}\right.\eqno(2.1)
$$

\begin{theorem}\label{Theorem 2.2}
Assume that $f_1,$ $f_2$ satisfy (H1) and (H2), $\xi^{(1)},
\xi^{(2)} \in S_\mathcal{F}^2(T, T+K; \mathbb{R}),$ $\delta$
satisfies (a1), (a2), and for each $t\in [0, T],$ $y\in \mathbb{R},$
$z\in \mathbb{R}^d,$ $f_2(t, y, z, \cdot)$ is increasing, i.e.,
$f_2(t, y, z, \theta_r)\geq f_2(t, y, z, \theta_r^\prime)$, if
$\theta_r\geq \theta_r^\prime,$ $\theta, \theta^\prime\in
L_\mathcal{F}^2(t, T+K; \mathbb{R}), r\in [t, T+K].$ If
$\xi_s^{(1)}\geq \xi_s^{(2)}, s\in [T, T+K]$ and $f_1(t, y, z,
\theta_r)\geq f_2(t, y, z, \theta_r), t\in[0, T], y\in \mathbb{R},
z\in \mathbb{R}^d, \theta\in L_\mathcal{F}^2(t, T+K; \mathbb{R}),
r\in [t, T+K],$ then $Y_t^{(1)}\geq Y_t^{(2)},\ a.e.,a.s..$
\end{theorem}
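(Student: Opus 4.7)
The plan is to reduce the ABSDE comparison to a finite sequence of classical BSDE comparisons by backward iteration on time intervals of length $\delta_0:=\min_{s\in[0,T]}\delta(s)$, which is strictly positive because $\delta$ is continuous and positive on the compact set $[0,T]$. First I would choose $n\in\mathbb{N}$ with $n\delta_0\geq T$ and form the backward partition $T=t_0>t_1>\cdots>t_n=0$ with $t_k-t_{k+1}\leq\delta_0$. The induction hypothesis to carry through is: $Y_t^{(1)}\geq Y_t^{(2)}$ almost surely for a.e.\ $t\in[t_k,T+K]$. The base case $k=0$ is immediate from $\xi^{(1)}\geq\xi^{(2)}$ on $[T,T+K]$.

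For the inductive step, the key geometric fact is that for $t\in[t_{k+1},t_k]$ one has $t+\delta(t)\geq t_{k+1}+\delta_0=t_k$, so the anticipated time already lies in the handled region. Hence on $[t_{k+1},t_k]$ the pair $(Y^{(j)},Z^{(j)})$ satisfies a BSDE
\begin{equation*}
Y_t^{(j)}=Y_{t_k}^{(j)}+\int_t^{t_k}\widetilde{f}_j(s,Y_s^{(j)},Z_s^{(j)})\,ds-\int_t^{t_k}Z_s^{(j)}\,dB_s,
\end{equation*}
whose driver $\widetilde{f}_j(s,y,z):=f_j(s,y,z,Y_{s+\delta(s)}^{(j)})$ no longer involves the unknown---the anticipated slot has been ``frozen'' to the already-computed trajectory of $Y^{(j)}$ on $[t_k,T+K]$. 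The terminal inequality $Y_{t_k}^{(1)}\geq Y_{t_k}^{(2)}$ is available from the induction hypothesis.

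The pointwise comparison of the reduced drivers is the chain
\begin{equation*}
\widetilde{f}_1(s,y,z)=f_1(s,y,z,Y_{s+\delta(s)}^{(1)})\geq f_2(s,y,z,Y_{s+\delta(s)}^{(1)})\geq f_2(s,y,z,Y_{s+\delta(s)}^{(2)})=\widetilde{f}_2(s,y,z),
\end{equation*}
where the first inequality uses $f_1\geq f_2$ and the second combines the monotonicity of $f_2$ in its anticipated $Y$-slot with the induction hypothesis applied at time $s+\delta(s)\geq t_k$. The Lipschitz and integrability properties of $\widetilde{f}_j$ are inherited from (H1)--(H2). Applying the classical $1$-dimensional BSDE comparison theorem of \cite{KPQ} then yields $Y_t^{(1)}\geq Y_t^{(2)}$ on $[t_{k+1},t_k]$, and the induction closes after at most $n$ steps.

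The main technical obstacle is that the reduced driver $\widetilde{f}_j(s,\cdot,\cdot)$ is $\mathcal{F}_{s+\delta(s)}$-measurable rather than $\mathcal{F}_s$-measurable, so strictly speaking it is an anticipating coefficient that does not fit the literal hypotheses of \cite{KPQ}. One must verify that the classical comparison survives this: the linearization--plus--Girsanov argument still goes through because the terminal difference and the pointwise driver difference are both nonnegative, so the sign of $Y^{(1)}-Y^{(2)}$ is preserved; equivalently, each reduced equation can be viewed as an instance of the ABSDE (1.1) whose anticipated term has been specialized to a given process, and the comparison holds within that framework. This bookkeeping is where the argument has to be checked carefully, since everything else in the backward iteration is essentially formal.
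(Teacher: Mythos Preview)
Your backward-iteration strategy is exactly the one the paper uses. Note that the paper does not itself prove Theorem~2.2---it is quoted from Peng--Yang \cite{PY}---but the paper's proof of its own Theorem~3.1 (which specializes to Theorem~2.2 via Remarks~3.2--3.3) follows the same scheme: decompose $[0,T]$ into finitely many subintervals on which the anticipated argument already lies in the previously handled region, then apply the classical BSDE comparison theorem on each. The paper's partition (Proposition~3.1) is defined adaptively via $t_i=\min\{t:\ s+\delta(s)\geq t_{i-1}\text{ for all }s\in[t,T]\}$, whereas you use the uniform step $\delta_0=\min_{[0,T]}\delta>0$; for Theorem~2.2 (no $\zeta$) both work, and your version is in fact a bit simpler.

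Your ``main technical obstacle'' is, however, a non-issue, and this is worth clearing up. By the very definition of the generator in this framework (see the line just before (H1)), $f_j$ takes values in $L^2(\mathcal{F}_s;\mathbb{R})$: given $s$, $y$, $z$ and a future random variable $\theta_r$, the output $f_j(s,y,z,\theta_r)$ is already $\mathcal{F}_s$-measurable (in the examples this is implemented via the conditional expectation $E^{\mathcal{F}_s}[\cdot]$). Hence your reduced driver $\widetilde{f}_j(s,y,z)=f_j(s,y,z,Y^{(j)}_{s+\delta(s)})$ is $\mathcal{F}_s$-adapted, inherits the Lipschitz bound from (H1), and the square-integrability of $\widetilde{f}_j(\cdot,0,0)$ follows from (H1), (H2), (a2) and $Y^{(j)}\in S^2_{\mathcal{F}}$. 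The comparison theorem of \cite{KPQ} therefore applies verbatim on each subinterval; no anticipating-coefficient argument or extra Girsanov bookkeeping is needed. With that clarification your proof is complete and matches the paper's approach.
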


\section{Comparison Theorem for Anticipated BSDEs}\label{secsec:bsde}

Consider the following $1$-dimensional ABSDEs:
$$
\left\{
\begin{tabular}{rll}
$-dY_t^{(j)}=$ & $f_j(t, Y_t^{(j)}, Z_t^{(j)},
Y_{t+\delta(t)}^{(j)}, Z_{t+\zeta(t)}^{(j)})dt-Z_t^{(j)}dB_t, $ & $
t\in[0, T];$\\
$Y_t^{(j)}=$ & $\xi_t^{(j)}, $ & $t\in [T, T+K];$\\
$Z_t^{(j)}=$ & $\eta_t^{(j)}, $ & $t\in [T, T+K],$
\end{tabular}\right.\eqno(3.1)
$$
where $j=1, 2,$ $f_j$ satisfies (H1), (H2), $(\xi^{(j)},
\eta^{(j)})\in S_\mathcal{F}^2(T, T+K; \mathbb{R})\times
L_\mathcal{F}^2(T, T+K; \mathbb{R}^{d}),$ $\delta, \zeta$ satisfy
(a1) and (a2). By Theorem \ref{Theorem 2.1}, either of the above
ABSDEs has a unique adapted solution.

\begin{proposition}\label{Proposition 3.1}
Putting $t_0=T,$ we define by iteration
\begin{center}
$ t_i:=\min\{t \in [0, T]: \min\{s+ \delta(s),\ s+\zeta(s)\}\geq
t_{i-1},\ for\ all\ s\in [t, T]\},\ \ i\geq 1. $
\end{center}
Set $N:=\max\{i: t_{i-1} > 0\}$. Then $N$ is finite, $t_N=0$ and
\begin{center}
$ [0, T]=[0, t_{N-1}] \cup [t_{N-1}, t_{n-2}] \cup \cdots \cup [t_2,
t_1] \cup [t_1, T]. $
\end{center}
\end{proposition}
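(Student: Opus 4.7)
The plan is to exploit the compactness of $[0,T]$ together with the strict positivity of $\delta$ and $\zeta$ to extract a uniform positive lower bound on the delays, and then to show that the recursively defined sequence $\{t_i\}$ must drop by at least this bound at each step until it reaches $0$.

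First, I would observe that since $\delta$ and $\zeta$ are continuous and take values in $\mathbb{R}^+\setminus\{0\}$ on the compact interval $[0,T]$, there exists $\varepsilon>0$ such that $\delta(s)\geq\varepsilon$ and $\zeta(s)\geq\varepsilon$ for every $s\in[0,T]$. Setting $g(s):=\min\{s+\delta(s),\,s+\zeta(s)\}$, which is continuous on $[0,T]$, and
$$A_i:=\{t\in[0,T]:\ g(s)\geq t_{i-1}\text{ for all }s\in[t,T]\},$$
I would then establish by induction on $i$ that each $A_i$ is a non-empty closed subinterval of $[0,T]$: non-empty because $g(T)>T\geq t_{i-1}$ forces $T\in A_i$, and closed by continuity of $g$ (for instance, rewriting $A_i=\{t:G(t)\geq t_{i-1}\}$ with $G(t):=\min_{s\in[t,T]}g(s)$ continuous and nondecreasing). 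This justifies that the minimum $t_i=\min A_i$ is actually attained.

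The key estimate is $t_i\leq\max(0,\,t_{i-1}-\varepsilon)$: every $s\in[0,T]$ with $s\geq t_{i-1}-\varepsilon$ satisfies $s+\delta(s)\geq s+\varepsilon\geq t_{i-1}$ and similarly $s+\zeta(s)\geq t_{i-1}$, so $\max(0,t_{i-1}-\varepsilon)$ lies in $A_i$. Starting from $t_0=T$, the sequence therefore loses at least $\varepsilon$ per step while positive and drops to $0$ as soon as $t_{i-1}\leq\varepsilon$; moreover once $t_i=0$, the bound $g(s)\geq\varepsilon>0$ places $0$ into $A_{i+1}$ and thus $t_{i+1}=0$, so the null index persists. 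Hence $\{i:t_{i-1}>0\}$ is a finite set (contained in $\{1,\ldots,\lceil T/\varepsilon\rceil+1\}$), so $N$ is finite, and the very definition of $N$ forces $t_N=0$. The chain $T=t_0>t_1>\cdots>t_{N-1}>t_N=0$ then automatically yields the claimed partition of $[0,T]$.

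The only mildly delicate point I anticipate is justifying the existence of $\min A_i$, which reduces to the continuity of $g$; the rest is a pigeonhole argument driven by the uniform positive lower bound $\varepsilon$ on the delays.
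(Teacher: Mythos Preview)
Your argument is correct and in fact a bit more careful than the paper's own proof; in particular you verify that each $A_i$ is non-empty and closed so that $t_i$ is well defined, a point the paper takes for granted. The route, however, is genuinely different. The paper argues by contradiction: assuming $N$ infinite, it observes that $\min\{t_i+\delta(t_i),\,t_i+\zeta(t_i)\}=t_{i-1}$ whenever $t_i>0$, so $\{t_i\}$ is strictly decreasing and bounded, hence convergent to some $\bar t$; passing to the limit forces $\delta(\bar t)=0$ or $\zeta(\bar t)=0$, contradicting strict positivity. It then handles $t_N=0$ by a separate continuity argument. Your approach instead extracts a uniform lower bound $\varepsilon$ on $\delta$ and $\zeta$ via compactness and proves the quantitative estimate $t_i\leq\max(0,\,t_{i-1}-\varepsilon)$, which simultaneously gives finiteness of $N$, the explicit bound $N\leq\lceil T/\varepsilon\rceil+1$, and $t_N=0$ in one stroke. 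The paper's method is slightly slicker in avoiding an explicit $\varepsilon$, while yours is constructive, yields a concrete bound on $N$, and handles well-definedness and the terminal value $t_N=0$ more transparently.
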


\begin{proof}
Let us first prove that $N$ is finite. For this purpose, we apply
the method of reduction to absurdity. Suppose $N$ is infinite. From
the definition of $\{t_i\}_{i=1}^{+\infty}$, we know
$$
\min\{t_i+ \delta(t_i),\ t_i+ \zeta(t_i)\}=t_{i-1},\ i=1, 2,
\cdots.\eqno(3.2)
$$
Since $\delta(\cdot)$ and $\zeta(\cdot)$ are continuous and
positive, thus obviously we have $t_i<t_{i-1}$ $(i=1, 2, \cdots).$
Therefore $\{t_i\}_{i=1}^{+\infty}$ converges as a strictly monotone
and bounded series. Denote its limit by $\bar{t}.$ Letting
$i\rightarrow +\infty$ on both sides of (3.2), we get
\begin{center}
$\min\{\bar{t}+\delta(\bar{t}),\ \bar{t}+\zeta(\bar{t})\}=\bar{t}.$
\end{center}
Hence $\delta(\bar{t})=0$ or $\zeta(\bar{t})=0$, which is just a
contradiction since both $\delta$ and $\zeta$ are positive.

Next we will show that $t_N=0.$ In fact, the following holds
obviously: $$\min\{t_N+\delta(t_N),\ t_N+\zeta(t_N)\}> t_N,$$ which
implies $t_N = 0,$ or else  we can find a $\tilde{t}\in [0, t_N)$
due to the continuity of $\delta(\cdot)$ and $\zeta(\cdot)$ such
that
$$\min\{s+\delta(s),\ s+\zeta(s)\}\geq t_N,\ for\ all\ s\in [\tilde{t},
T],$$ from which we know that $\tilde{t}$ is an element of the
series as well.
\end{proof}

\begin{proposition}\label{Proposition 3.2}
Suppose $(Y^{(j)}, Z^{(j)})$
$(j=1, 2)$ are the solutions of ABSDEs (3.1) respectively. Then for
fixed $i\in {\{1, 2, \cdots, N\}},$ over time interval $[t_i,
t_{i-1}]$, ABSDEs (3.1) are equivalent to the following ABSDEs:
$$
\left\{
\begin{tabular}{rll}
$-d\bar{Y}_t^{(j)}=$ & $f_j(t, \bar{Y}_t^{(j)}, \bar{Z}_t^{(j)},
\bar{Y}_{t+\delta(t)}^{(j)},
\bar{Z}_{t+\zeta(t)}^{(j)})dt-\bar{Z}_t^{(j)}dB_t, $ & $
t\in[t_i, t_{i-1}];$\\
$\bar{Y}_t^{(j)}=$ & $Y_t^{(j)}, $ & $t\in [t_{i-1}, T+K];$\\
$\bar{Z}_t^{(j)}=$ & $Z_t^{(j)}, $ & $t\in [t_{i-1}, T+K],$
\end{tabular}\right.\eqno(3.3)
$$
which are also equivalent to the following BSDEs with terminal
conditions $Y_{t_{i-1}}^{(j)}$ respectively:
$$
\tilde{Y}_t^{(j)}=Y_{t_{i-1}}^{(j)}+\int_t^{t_{i-1}} f_j(s,
\tilde{Y}_s^{(j)}, \tilde{Z}_s^{(j)}, Y_{s+\delta(s)}^{(j)},
Z_{s+\zeta(s)}^{(j)})ds-\int_t^{t_{i-1}}
\tilde{Z}_s^{(j)}dB_s.\eqno(3.4)$$ That is to say,
$$Y_t^{(j)}=\bar{Y}_t^{(j)}=\tilde{Y}_t^{(j)},\
Z_t^{(j)}=\bar{Z}_t^{(j)}=\tilde{Z}_t^{(j)}=\frac{d\langle
\tilde{Y}^{(j)}, B\rangle_t}{d t},\ t\in [t_i, t_{i-1}],\ j=1,2,$$
where $\langle \tilde{Y}^{(j)}, B\rangle$ is the variation process
generated by $\tilde{Y}^{(j)}$ and the Brownian motion $B$.
\end{proposition}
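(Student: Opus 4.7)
The plan is a two-step reduction driven by the defining property of the partition from Proposition \ref{Proposition 3.1}: for every $s \in [t_i, t_{i-1}]$, one has $s+\delta(s) \geq t_{i-1}$ and $s+\zeta(s) \geq t_{i-1}$, so the anticipated arguments $Y^{(j)}_{s+\delta(s)}$ and $Z^{(j)}_{s+\zeta(s)}$ lie in the already-resolved interval $[t_{i-1}, T+K]$. First I would show that the restriction of (3.1) to $[t_i, t_{i-1}]$ coincides with (3.3); then that (3.3) is in reality a classical BSDE, namely (3.4), obtained by ``freezing'' the anticipated arguments into known adapted processes.

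For the first equivalence, $(Y^{(j)}, Z^{(j)})$ solving (3.1) trivially satisfies the dynamics in (3.3) on $[t_i, t_{i-1}]$, because the prescribed boundary data on $[t_{i-1}, T+K]$ is chosen to agree with it. Conversely, (3.3) is itself an ABSDE on $[t_i, t_{i-1}]$ with terminal interval $[t_{i-1}, T+K]$ that meets the assumptions of Theorem \ref{Theorem 2.1} (with $T$ replaced by $t_{i-1}$), so it has a unique adapted solution; uniqueness then forces $\bar Y^{(j)} = Y^{(j)}$ and $\bar Z^{(j)} = Z^{(j)}$ on $[t_i, t_{i-1}]$. For the second equivalence, I would introduce the frozen generator
\[
\tilde f_j(\omega, s, y, z) := f_j\bigl(\omega, s, y, z, Y^{(j)}_{s+\delta(s)}(\omega), Z^{(j)}_{s+\zeta(s)}(\omega)\bigr), \quad s \in [t_i, t_{i-1}],
\]
which is Lipschitz in $(y, z)$ by (H1), and I would verify that $E\int_{t_i}^{t_{i-1}} |\tilde f_j(s, 0, 0)|^2 ds < \infty$ using (H1), (H2), (a2) and $(Y^{(j)}, Z^{(j)}) \in S_\mathcal{F}^2 \times L_\mathcal{F}^2$. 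Then (3.4) is a classical Pardoux--Peng BSDE on $[t_i, t_{i-1}]$ with $L^2$ terminal condition $Y^{(j)}_{t_{i-1}}$, with a unique adapted solution $(\tilde Y^{(j)}, \tilde Z^{(j)})$. Since $(\bar Y^{(j)}, \bar Z^{(j)})$ trivially satisfies this same BSDE, uniqueness yields all the claimed equalities, and $\tilde Z^{(j)}_t = d\langle \tilde Y^{(j)}, B\rangle_t / dt$ follows from the standard identification of the martingale part of the It\^o semimartingale $\tilde Y^{(j)}$.

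The main obstacle is bookkeeping rather than conceptual: one must verify that $\tilde f_j$ genuinely inherits the classical BSDE hypotheses from (H1)--(H2) and the regularity of the ``known'' processes on $[t_{i-1}, T+K]$. Hypothesis (a2) plays the decisive role here, since it delivers
\[
E\int_{t_i}^{t_{i-1}} \bigl(|Y^{(j)}_{s+\delta(s)}|^2 + |Z^{(j)}_{s+\zeta(s)}|^2\bigr) ds \leq M\, E\int_{t_i}^{T+K} \bigl(|Y^{(j)}_s|^2 + |Z^{(j)}_s|^2\bigr) ds < \infty,
\]
and the required bound on $E\int_{t_i}^{t_{i-1}} |\tilde f_j(s, 0, 0)|^2 ds$ then falls out via the Lipschitz property (H1) and (H2). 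Once this estimate is in place, the proposition reduces to two applications of uniqueness: Theorem \ref{Theorem 2.1} for (3.3) and the classical Pardoux--Peng existence/uniqueness theorem for (3.4).
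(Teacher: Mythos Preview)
Your proposal is correct and follows essentially the same route as the paper's own proof: both hinge on the observation that $s+\delta(s)\geq t_{i-1}$ and $s+\zeta(s)\geq t_{i-1}$ for $s\in[t_i,t_{i-1}]$, then verify that the frozen generator meets the classical BSDE hypotheses and conclude via uniqueness for (3.3) and (3.4). Your argument is simply more explicit about the integrability check (invoking (a2), which the paper leaves implicit in its ``clearly'').
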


\begin{proof}
We only need to prove the equivalence between
ABSDE (3.3) and BSDE (3.4). It is obvious that for each $s\in [t_i,
t_{i-1}]$, $s+\delta(s)\geq t_{i-1}$, $s+\zeta(s)\geq t_{i-1}$, thus
$(\bar{Y}_{t+\delta(t)}^{(j)},
\bar{Z}_{t+\zeta(t)}^{(j)})=(Y_{t+\delta(t)}^{(j)},
Z_{t+\zeta(t)}^{(j)})$ in the generator of ABSDE (3.3). Clearly
$f_j(\cdot, \cdot, \cdot, Y_{s+\delta(s)}^{(j)},
Z_{t+\zeta(t)}^{(j)})$ satisfies the Lipschitz condition as well as
the integrable condition since $f_j$ satisfies (H1), (H2). Thus BSDE
(3.4) has a unique adapted solution.

Moreover, it is obvious that $(Y_t^{(j)}, Z_t^{(j)})_{t\in [t_i,
t_{i-1}]}$ satisfies both  ABSDE (3.3) and BSDE (3.4). Then from the
uniqueness of ABSDE's solution and that of BSDE's, we can easily
obtain the desired equalities.
\end{proof}

\begin{theorem}\label{Theorem 3.1}
Let $(Y^{(j)}, Z^{(j)})\in S_\mathcal{F}^2(0, T+K; \mathbb{R})\times
L_\mathcal{F}^2(0, T+K; \mathbb{R}^d)$ $(j=1, 2)$ be the unique
solutions to ABSDEs (3.1) respectively. If
\item{(i)} $\xi_s^{(1)}\geq \xi_s^{(2)}, s\in [T, T+K], a.e., a.s.;$
\item{(ii)} for all $t\in [0, T]$, $(y, z)\in
\mathbb{R}\times \mathbb{R}^d,$ $\theta^{(j)}\in S_\mathcal{F}^2(t,
T+K; \mathbb{R})$ $(j=1, 2)$ such that $\theta^{(1)} \geq
\theta^{(2)},$ $\{\theta_{r}^{(j)}\}_{r\in[t, T]}$ is a continuous
semimartingale and $(\theta_{r}^{(j)})_{r\in [T,
T+K]}=(\xi_r^{(j)})_{r\in [T, T_K]}$,
$$
f_1(t, y, z, \theta_{t+\delta(t)}^{(1)}, \eta_{t+\zeta(t)}^{(1)})
\geq f_2(t, y, z, \theta_{t+\delta(t)}^{(2)},
\eta_{t+\zeta(t)}^{(2)}),\ \ a.e., a.s.,\eqno(3.5)
$$
$$
f_1(t, y, z, \theta_{t+\delta(t)}^{(1)}, \frac{d\langle
\theta^{(1)}, B\rangle_r}{d r}|_{r=t+\zeta(t)}) \geq f_2(t, y, z,
\theta_{t+\delta(t)}^{(2)}, \frac{d\langle \theta^{(2)},
B\rangle_r}{d r}|_{r=t+\zeta(t)}),\ \ a.e., a.s.,\eqno(3.6)
$$
$$
f_1(t, y, z, \xi_{t+\delta(t)}^{(1)}, \frac{d\langle \theta^{(1)},
B\rangle_r}{d r}|_{r=t+\zeta(t)}) \geq f_2(t, y, z,
\xi_{t+\delta(t)}^{(2)}, \frac{d\langle \theta^{(2)}, B\rangle_r}{d
r}|_{r=t+\zeta(t)}),\ \ a.e., a.s.,\eqno(3.7)
$$
then $Y_t^{(1)} \geq Y_t^{(2)}, a.e., a.s..$

Moreover, the following holds:
\[
Y_0^{(1)}=Y_0^{(2)}\Leftrightarrow \left\{
\begin{tabular}{ll}
$\xi_T^{(1)}=\xi_T^{(2)};$ & $$\\
$f_1(t, Y_t^{(2)}, Z_t^{(2)}, Y_{t+\delta(t)}^{(1)},
Z_{t+\zeta(t)}^{(1)})=f_2(t, Y_t^{(2)}, Z_t^{(2)},
Y_{t+\delta(t)}^{(2)}, Z_{t+\zeta(t)}^{(2)}),\ \ t\in [0, T].$
\end{tabular}
\right.
\]
\end{theorem}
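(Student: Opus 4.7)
The plan is a backward induction along the partition $0 = t_N < t_{N-1} < \cdots < t_1 < t_0 = T$ supplied by Proposition \ref{Proposition 3.1}. By Proposition \ref{Proposition 3.2}, on each subinterval $[t_i, t_{i-1}]$ the ABSDE (3.1) collapses to an ordinary BSDE whose anticipated coefficients $Y^{(j)}_{s+\delta(s)}$ and $Z^{(j)}_{s+\zeta(s)}$ lie in $[t_{i-1}, T+K]$ and are therefore already known by the inductive hypothesis. To bridge between $Y^{(1)}$ and $Y^{(2)}$ over each step, I will interpose an auxiliary BSDE carrying the terminal datum of $Y^{(1)}$ and the driver of $Y^{(2)}$; this permits two successive applications of the classical 1D BSDE comparison theorem of \cite{KPQ}.

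For the base case $[t_1, T]$, both $s+\delta(s)$ and $s+\zeta(s)$ lie in $[T, T+K]$, so the anticipated terms reduce to $\xi^{(j)}_{s+\delta(s)}$ and $\eta^{(j)}_{s+\zeta(s)}$. Choosing any continuous semimartingale extension $\theta^{(j)}$ of $\xi^{(j)}$ to $[t_1, T]$ with $\theta^{(1)} \geq \theta^{(2)}$ (for instance the constant extension at $\xi^{(j)}_T$), hypothesis (3.5) delivers the pointwise driver comparison; combined with $\xi^{(1)}_T \geq \xi^{(2)}_T$ from (i), the standard 1D BSDE comparison yields $Y^{(1)} \geq Y^{(2)}$ on $[t_1, T]$.

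For the inductive step, assume $Y^{(1)} \geq Y^{(2)}$ on $[t_{i-1}, T+K]$ and let $(\hat Y, \hat Z)$ denote the unique solution on $[t_i, t_{i-1}]$ of the BSDE with terminal $Y^{(1)}_{t_{i-1}}$ and driver $f_2(s, \cdot, \cdot, Y^{(2)}_{s+\delta(s)}, Z^{(2)}_{s+\zeta(s)})$. Since $\hat Y$ and $Y^{(2)}$ share that driver while $\hat Y_{t_{i-1}} \geq Y^{(2)}_{t_{i-1}}$, one appeal to 1D BSDE comparison gives $\hat Y \geq Y^{(2)}$. For the second comparison $Y^{(1)} \geq \hat Y$ (identical terminal), I need the pointwise driver inequality $f_1(s, y, z, Y^{(1)}_{s+\delta(s)}, Z^{(1)}_{s+\zeta(s)}) \geq f_2(s, y, z, Y^{(2)}_{s+\delta(s)}, Z^{(2)}_{s+\zeta(s)})$. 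This follows by invoking (ii) with $\theta^{(j)}$ taken to be $Y^{(j)}$ on $[t_{i-1}, T+K]$---so that $d\langle \theta^{(j)}, B\rangle_r/dr = Z^{(j)}_r$ on $[t_{i-1}, T]$---and constantly extended on $[s, t_{i-1})$ so as to preserve $\theta^{(1)} \geq \theta^{(2)}$; one then invokes (3.5) if $s+\zeta(s) \geq T$, (3.7) if $s+\delta(s) \geq T > s+\zeta(s)$, and (3.6) if both anticipated times lie in $[t_{i-1}, T)$. Another 1D BSDE comparison then yields $Y^{(1)} \geq \hat Y \geq Y^{(2)}$ on $[t_i, t_{i-1}]$, completing the induction.

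For the ``moreover'' equivalence, the $\Leftarrow$ direction follows because, under the stated identities, $(Y^{(2)}, Z^{(2)})$ satisfies the same BSDE as $(Y^{(1)}, Z^{(1)})$ on each $[t_i, t_{i-1}]$ (substitute the generator identity into the BSDE for $Y^{(2)}$ and match terminals using $\xi^{(1)}_T = \xi^{(2)}_T$), whence Lipschitz uniqueness of BSDEs propagates equality back to $0$. For $\Rightarrow$, chasing equality through each of the two intermediate BSDE comparisons by means of the strict 1D BSDE comparison theorem of \cite{KPQ} forces $\xi^{(1)}_T = \xi^{(2)}_T$ at the base step and the pointwise driver identity at $(Y^{(2)}_s, Z^{(2)}_s)$ on every subinterval. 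The principal technical difficulty throughout is the handling of the quadratic-variation derivative $d\langle \theta^{(j)}, B\rangle_r/dr|_{r=s+\zeta(s)}$ appearing in (3.6)--(3.7): one must verify that the chosen extension $\theta^{(j)}$ really yields this derivative equal to $Z^{(j)}_{s+\zeta(s)}$ on the region already treated by the induction, and that the three-way case split of (3.5)--(3.7) exhausts all relative positions of $s+\delta(s), s+\zeta(s)$ for a.e.~$s$.
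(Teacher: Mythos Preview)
Your proof is correct and follows the paper's approach: backward induction along the partition of Proposition \ref{Proposition 3.1}, reduction to ordinary BSDEs via Proposition \ref{Proposition 3.2}, and the 1-dimensional comparison theorem of \cite{KPQ} on each subinterval. The paper applies that comparison theorem once per step (it already handles ordered terminals and ordered drivers simultaneously, so your auxiliary $(\hat Y,\hat Z)$ is dispensable), and your explicit three-way case split for (3.5)--(3.7) spells out what the paper covers with a terse ``according to (ii)''.
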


\begin{proof}
Consider the ABSDE (3.1) one time interval by one time interval. For
the first step, we consider the case when $t\in [t_1, T]$. According
to Proposition \ref{Proposition 3.2}, we can equivalently consider
the following BSDE:
\begin{center} $ \tilde{Y}_t^{(j)}=\xi_T^{(j)}+\int_t^T f_j(s,
\tilde{Y}_s^{(j)}, \tilde{Z}_s^{(j)}, \xi_{s+\delta(s)}^{(j)},
\eta_{s+\zeta(s)}^{(j)})ds-\int_t^T \tilde{Z}_s^{(j)}dB_s, $
\end{center}
from which we have
$$
\tilde{Z}_t^{(j)}=\frac{d\langle \tilde{Y}^{(j)}, B\rangle_t}{d t},\
\ t\in [t_1, T].\eqno(3.8)
$$
Noticing that $\xi^{(j)}\in S_\mathcal{F}^2(T, T+K; \mathbb{R})$
$(j=1, 2)$ and $\xi^{(1)}\geq \xi^{(2)},$ from (3.5) in (ii), we can
get, for $s\in [t_1, T],$ $y\in \mathbb{R}$, $z\in \mathbb{R}^d,$
\begin{center}
$f_1(s, y, z, \xi_{s+\delta(s)}^{(1)}, \eta_{s+\zeta(s)}^{(1)}) \geq
f_2(s, y, z, \xi_{s+\delta(s)}^{(2)}, \eta_{s+\zeta(s)}^{(2)}). $
\end{center}
According to the comparison theorem for $1$-dimensional BSDEs, we
can get
\begin{center}
$ \tilde{Y}_t^{(1)}\geq \tilde{Y}_t^{(2)},\ \ t\in[t_1, T],\ \
a.e.,a.s. $
\end{center}
 as well as
\[
Y_{t_1}^{(1)}=Y_{t_1}^{(2)}\Leftrightarrow \left\{
\begin{tabular}{ll}
$\xi_T^{(1)}=\xi_T^{(2)};$ & $$\\
$f_1(t, \tilde{Y}_t^{(2)}, \tilde{Z}_t^{(2)},
\xi_{t+\delta(t)}^{(1)}, \eta_{t+\zeta(t)}^{(1)})=f_2(t,
\tilde{Y}_t^{(2)}, \tilde{Z}_t^{(2)}, \xi_{t+\delta(t)}^{(2)},
\eta_{t+\zeta(t)}^{(2)}),\ \ t\in [t_1, T].$
\end{tabular}
\right.
\]
Consequently,
$$
Y_t^{(1)}\geq Y_t^{(2)},\ \ t\in[t_1, T+K],\ \ a.e.,a.s..\eqno(3.9)
$$

For the second step, we consider the case when $t\in [t_2, t_1]$.
Similarly, according to Proposition \ref{Proposition 3.2}, we can
consider the following BSDE equivalently:
\begin{center}
$ \tilde{Y}_t^{(j)}=Y_{t_1}^{(j)}+\int_t^{t_1} f_j(s,
\tilde{Y}_s^{(j)}, \tilde{Z}_s^{(j)}, Y_{s+\delta(s)}^{(j)},
Z_{s+\zeta(s)}^{(j)})ds-\int_t^{t_1} \tilde{Z}_s^{(j)}dB_s, $
\end{center}
from which we have $\tilde{Z}_t^{(j)}=\frac{d\langle
\tilde{Y}^{(j)}, B\rangle_t}{d t}$ for $t\in [t_2, t_1]$. Noticing
(3.8) and (3.9), according to (ii), we have, for $s\in [t_2, t_1]$,
$y\in \mathbb{R}$, $z\in \mathbb{R}^d,$
\begin{center}
$f_1(s, y, z, Y_{s+\delta(s)}^{(1)}, Z_{s+\zeta(s)}^{(1)}) \geq
f_2(s, y, z, Y_{s+\delta(s)}^{(2)}, Z_{s+\zeta(s)}^{(2)}). $
\end{center}
Applying the comparison theorem for BSDEs again, we can finally get
\begin{center}
$ Y_t^{(1)}\geq Y_t^{(2)},\ \ t\in[t_2, t_1],\ \ a.e.,a.s. $
\end{center} as well as
\[
Y_{t_2}^{(1)}=Y_{t_2}^{(2)}\Leftrightarrow \left\{
\begin{tabular}{ll}
$Y_{t_1}^{(1)}=Y_{t_1}^{(2)};$ & $$\\
$f_1(t, \tilde{Y}_t^{(2)}, \tilde{Z}_t^{(2)}, Y_{t+\delta(t)}^{(1)},
Z_{t+\zeta(t)}^{(1)})=f_2(t, \tilde{Y}_t^{(2)}, \tilde{Z}_t^{(2)},
Y_{t+\delta(t)}^{(2)}, Z_{t+\zeta(t)}^{(2)}),\ \ t\in [t_2, t_1].$
\end{tabular}
\right.
\]

Similarly to the above steps, we can give the proofs for the other
cases when $t\in [t_3, t_2],$ $[t_4, t_3],$ $\cdots,$ $[t_N,
t_{N-1}].$
\end{proof}

\begin{example}\label{Example 3.1}
Now suppose that we are facing with the following two ABSDEs:
$$\left\{
\begin{tabular}{rll}
$-dY_t^{(1)}=$ & $E^{\mathcal{F}_t}[Y_{t+\delta(t)}^{(1)}+\sin
(2Y_{t+\delta(t)}^{(1)})+|Z_{t+\zeta(t)}^{(1)}|+2]dt-Z_t^{(1)}dB_t,
$ & $
t\in[0, T];$\\
$Y_t^{(1)}=$ & $\xi_t^{(1)}, $ & $t\in[T, T+K];$\\
$Z_t^{(1)}=$ & $\eta_t^{(1)}, $ & $t\in[T, T+K],$
\end{tabular}\right.$$
$$
\left\{
\begin{tabular}{rll}
$-dY_t^{(2)}=$ & $E^{\mathcal{F}_t}[Y_{t+\delta(t)}^{(2)}+2 |\cos
Y_{t+\delta(t)}^{(2)}|+\sin Z_{t+\zeta(t)}^{(2)}-2]dt-Z_t^{(2)}dB_t,
$ & $
t\in[0, T];$\\
$Y_t^{(2)}=$ & $\xi_t^{(2)}, $ & $t\in[T, T+K];$\\
$Z_t^{(2)}=$ & $\eta_t^{(2)}, $ & $t\in[T, T+K],$
\end{tabular}\right.
$$
where $\xi_t^{(1)}\geq \xi_t^{(2)}, t\in [T, T+K].$

As both the generators depend on the anticipated term of $Z$ and
neither of them is increasing in the anticipated term of $Y$, we
cannot apply Peng, Yang's comparison theorem to compare $Y^{(1)}$
and $Y^{(2)}.$ While the following holds true:
\begin{center}
$x+\sin (2x)+|u|+2 \geq y+2 |\cos y| + \sin v-2,\ for\ all\ x\geq y,
x, y\in \mathbb{R}, u, v \in \mathbb{R}^d,$
\end{center}
which implies (3.5)-(3.7), then according to Theorem \ref{Theorem
3.1}, we get $Y_t^{(1)}\geq Y_t^{(2)},\ a.e.,\ a.s..$
\end{example}

\begin{remark}\label{Remark 3.1}
By the same way, for the case when
$\delta=\zeta$, (3.5)-(3.7) can be replaced by (3.6) together with
\begin{center}
$ f_1(t, y, z, \xi_{t+\delta(t)}^{(1)}, \eta_{t+\zeta(t)}^{(1)})
\geq f_2(t, y, z, \xi_{t+\delta(t)}^{(2)},
\eta_{t+\zeta(t)}^{(2)}),\ \ a.e., a.s.. $
\end{center}
\end{remark}

\begin{remark}\label{Remark 3.2}
If $f_1$ and $f_2$ are independent of the
anticipated term of $Z$, then (3.5)-(3.7) reduces to
$$
f_1(t, y, z, \theta_{t+\delta(t)}^{(1)}) \geq f_2(t, y, z,
\theta_{t+\delta(t)}^{(2)}).\eqno(3.10)
$$
Note that this conclusion is just with respect to the ABSDEs (2.1).
\end{remark}

\begin{remark}\label{Remark 3.3}
The generators $f_1$ and $f_2$ will satisfy (3.10), if for all $t\in
[0, T],$ $y\in \mathbb{R},$ $z\in \mathbb{R}^d,$ $\theta\in
L_\mathcal{F}^2(t, T+K; \mathbb{R}),$ $r\in [t, T+K],$ $f_1(t, y, z,
\theta_r)\geq f_2(t, y, z, \theta_r),$ together with one of the
following:
\item{(i)} for
all $t\in [0, T],$ $y\in \mathbb{R},$ $z\in \mathbb{R}^d,$ $f_1(t,
y, z, \cdot)$ is increasing, i.e., $f_1(t, y, z, \theta_r)\geq
f_1(t, y, z, \theta_r^\prime),$ if $\theta \geq \theta^\prime,$
$\theta, \theta^\prime\in L_\mathcal{F}^2(t, T+K; \mathbb{R}),$
$r\in [t, T+K];$
\item{(ii)} for all $t\in [0, T],$ $y\in \mathbb{R},$ $z\in \mathbb{R}^d,$
$f_2(t, y, z, \cdot)$ is increasing, i.e., $f_2(t, y, z,
\theta_r)\geq f_2(t, y, z, \theta_r^\prime),$ if $\theta \geq
\theta^\prime,$ $\theta, \theta^\prime\in L_\mathcal{F}^2(t, T+K;
\mathbb{R}),$ $r\in [t, T+K].$

Note that the latter is just the case that Peng-Yang [6] discussed
(see Theorem \ref{Theorem 2.2}).
\end{remark}

\begin{remark}\label{Remark 3.4}
The generators $f_1$ and $f_2$ will satisfy
(3.10), if
\begin{center}
$ f_1(t, y, z, \theta_r)\geq \tilde{f}(t, y, z, \theta_r)\geq f_2(t,
y, z, \theta_r), $
\end{center}
for all $t\in [0, T],$ $y\in \mathbb{R},$ $z\in \mathbb{R}^d,$
$\theta \in L_\mathcal{F}^2(t, T+K; \mathbb{R}), r\in [t, T+K].$
Here the function $\tilde{f}(t, y, z, \cdot)$ is increasing, for all
$t\in [0, T],$ $y\in \mathbb{R},$ $z\in \mathbb{R}^d,$
  i.e., $\tilde{f}(t, y, z, \theta_r)\geq \tilde{f}(t, y, z,
\theta_r^\prime),$ if $\theta_r\geq \theta_r^\prime,$ $\theta,
\theta^\prime\in L_\mathcal{F}^2(t, T+K; \mathbb{R}), r\in [t,
T+K].$
\end{remark}

\begin{example}\label{Example 3.2} The following three functions satisfy
the conditions in Remark \ref{Remark 3.4}: $f_1(t, y, z,
\theta_r)=E^{\mathcal{F}_t}[\theta_r+2 \cos \theta_r+1]$,
$\tilde{f}(t, y, z, \theta_r)=E^{\mathcal{F}_t}[\theta_r+\cos
\theta_r]$, $f_2(t, y, z, \theta_r)=E^{\mathcal{F}_t}[\theta_r+\sin
(2\theta_r)-2].$
\end{example}

\section*{Acknowledgements}

The author is grateful to professor Shige Peng for his encouragement
and helpful discussions.

%% The Appendices part is started with the command \appendix;
%% appendix sections are then done as normal sections
%% \appendix

%% \section{}
%% \label{}

\end{document}